\newtheorem{defi}{\bf Definition}
\newtheorem{thm}[defi]{\bf Theorem}
\newtheorem{lm}[defi]{\bf Lemma}
\newtheorem{cor}[defi]{\bf Corollary}
\newtheorem{prp}[defi]{\bf Proposition}
\def\meas{\operatorname{meas}}
\begin{document}

\title{On the number variance of sequences with small additive energy}
\date{\today}
\author{Zonglin Li}
\address{School of Mathematics, University of Bristol, Bristol BS8 1UG, U.K.}
\email{zonglin.li@bristol.ac.uk}
\author{Nadav Yesha}
\address{Department of Mathematics, University of Haifa, 3498838 Haifa, Israel.}
\email{nyesha@univ.haifa.ac.il}

\thanks{We thank Jens Marklof and Zeév Rudnick for helpful discussions and comments. Zonglin Li is supported by the China Scholarship Council [202008060338]. Nadav Yesha is supported by the ISRAEL SCIENCE FOUNDATION (grant No.~1881/20).}

\begin{abstract}
	For a real-valued sequence $ (x_n)_{n=1}^\infty $, denote by $ S_N(\ell) $ the number of its first $ N $ fractional parts lying in a random interval of size $ \ell := L/N $, where $ L=o(N) $ as $ N\to \infty $. We study the variance of $ S_N(\ell) $ (the number variance) for sequences of the form $ x_n = \alpha a_n$, where $(a_n)_{n=1}^\infty $ is a sequence of distinct integers. We show that if the additive energy of the sequence $(a_n)_{n=1}^\infty $ is bounded from above by $ N^{5/2-\varepsilon}/L $ for some $ \varepsilon>0 $, then for almost all $ \alpha $, the number variance is asymptotic to $ L $ (Poissonian number variance). This holds in particular for the sequence $ x_n = \alpha n^d, d\ge 2 $ whenever $ L = N^{\beta}$ with $ 0 \le \beta<1/2 $.
\end{abstract}

\maketitle
\section{Introduction}
The study of the distribution of fractional parts of real-valued sequences has a long history which goes back to the pioneering work of Weyl \cite{jr:Weyl}. The basic notion in this theory is of \emph{uniform distribution modulo one}: we say that a real-valued sequence $ \left( x_n \right)_{n=1}^\infty $ is uniformly distributed modulo one (u.d. mod 1) if for any interval $ I \subseteq [0,1) $, we have
\[ \lim\limits_{N\to \infty} \frac{1}{N} \# \left\{ 1\le n \le N : \{x_n\}  \in I \right\} = \text{length}(I) \]
where $ \{ x \} $ denotes the fractional part of $ x $.
A simple example of a u.d. mod 1 sequence is the Kronecker sequence $ x_n = \alpha n $ where $ \alpha \in \mathbb{R} $ is irrational. This was extended by Weyl \cite{jr:Weyl} to  polynomial sequences $ x_n = p(n) $, where $ p(x)=a_d x^d +\dots +a_1 x+a_0 $ is a real polynomial such that at least one of its non-constant coefficients $ a_1 , \dots, a_d $ is irrational. In particular, the monomial sequence  $ x_n = \alpha n^d $ where $\alpha$ is irrational and $d$ is a positive integer is  u.d. mod 1.

If one is interested in detecting pseudo-random behaviour of the fractional parts of a sequence, the mere notion of u.d. mod 1 is far from satisfactory. A better way to capture such behaviour is to consider finer-scale statistics, such as the  distribution of the gaps between neighbouring elements of the first $ N $ fractional parts of the sequence. For a pseudo-random sequence, this distribution will converge (after rescaling by $ 1/N $, the size of the average gap)  to the exponential distribution, which is the gap distribution of a sequence of independent, uniformly distributed random points in the unit interval (Poissonian gap distribution).

Since consecutive elements of the sequence are not necessarily neighbouring modulo one, the gap distribution is rather difficult to  study; nevertheless, one can work with the simpler \emph{pair correlation function} which detects \emph{all} pairs of elements modulo one and is therefore easier to analyse.
Additionally, if we restrict to sequences of the form $ x_n = \alpha a_n $ where $ (a_n)_{n=1}^\infty $ is a real-valued sequence, it is often the case that only little can be said for specific values of $ \alpha $, and therefore most of the available results for such sequences hold in a \emph{metric} sense, i.e., for almost all $ \alpha$.
Rudnick and Sarnak \cite{jr:RudnickSarnak} proved that for almost all $ \alpha$, the pair correlation function of the monomial sequence  $ x_n = \alpha n^d, d\ge 2 $ has a Lebesgue limiting distribution, which is consistent with the random model (Poissonian pair correlation).
Another example is $ x_n = \alpha a_n $ where $ (a_n)_{n=1}^\infty $ is a \emph{lacunary} sequence (i.e., $ a_{n+1} / a_n \ge C > 1 $ for all $ n $); for integer-valued lacunary sequences, Rudnick and Zaharescu \cite{jr:RudnickZaharescu} proved that for almost all $ \alpha $, the pair correlation, as well as all the higher level correlations, are Poissonian (which implies Poissonian gap distribution by a well-known argument, see, e.g., \cite{jr:KurlbergRudnick}); this was recently extended to real-valued lacunary sequences by Chaubey and Yesha \cite{jr:ChaubeyYesha}.
 
Given a sequence $ \mathcal{A} := (a_n)_{n=1}^\infty $ of \emph{distinct integers}, Aistleitner, Larcher and Lewko \cite{jr:AistleitnerLarcherLewko} obtained a streamlined criterion for metric Poissonian pair correlation of the sequence $ x_n = \alpha a_n $ in terms of the additive energy of $ \mathcal{A} $, which is defined by
\[
E_N(\mathcal{A})=\#\{(i,j,k,l)\in[1,N]^4:a_i+a_j=a_k+a_l\};
\]
that is, they showed that if $ E_N(\mathcal{A}) \ll N^{3-\varepsilon} $ for some $ \varepsilon>0 $, then $ (x_n)_{n=1}^\infty  $ has Poissonian pair correlation for almost all $ \alpha $. For example, it follows from the proofs of \cite{jr:RudnickSarnak,jr:RudnickZaharescu} that for $ a_n = n^d, d\ge2$ we have  $ E_N(\mathcal{A}) \ll N^{2+o(1)} $, and for integer-valued lacunary $ (a_n)_{n=1}^\infty $ we have $ E_N(\mathcal{A}) \ll N^2 $, so that metric Poissonian pair correlation of these sequences follows directly from the above criterion.

While the gap distribution and the pair correlation  are small-scale statistics which measure the behaviour of the sequence in the scale of the mean gap $ 1/N $, one can also consider intermediate-scale statistics such as the number variance of the sequence, i.e., the variance of the number of fractional parts of the sequence in random intervals of size $ L/N $, where $ L=L(N)\to \infty $ and $ L=o(N)$. For a random sequence of points, the number variance is asymptotic to $ L $ (Poissonian number variance), and we therefore expect the same behaviour for a pseudo-random sequence. In a recent paper \cite{jr:Yesha}, this was shown to hold in the metric sense for dilations of real-valued lacunary sequences $ (a_n)_{n=1}^\infty $ whenever $ L =  N^\beta$ with $ 0\le\beta<1/2 $; our goal is to formulate a criterion for metric Poissonian number variance for dilations of integer-valued sequences in terms of the additive energy, that will in particular show such behaviour for the monomial sequence  $ x_n = \alpha n^d, d\ge 2 $ (and will give an alternative proof to \cite{jr:Yesha} in the case of integer-valued lacunary $ (a_n)_{n=1}^\infty $).

\subsection{Statement of the main result}

Let $\chi$ denote the characteristic function of the interval $[-\tfrac{1}{2},\tfrac{1}{2})$.
The periodic characteristic function of the interval $[x_0-\tfrac{\ell}{2},x_0+\tfrac{\ell}{2})+\mathbb{Z}$ around $x_0$ of size $0<\ell := L/N \le 1$ can be written as
\[
\chi_\ell(x)=\sum_{n\in\mathbb{Z}}\chi\left(\frac{x-x_0+n}{\ell}\right).
\]
The number of the first $N$ elements of the sequence $(x_n)_{n=1}^\infty$ lying in the interval is
\[
S_N(\ell)=\sum_{j=1}^N\chi_\ell(x_j).
\]
If the centre $x_0$ is chosen uniformly randomly in the unit interval, $S_N(\ell)$ becomes a random variable,
whose expected value is
\[
\langle S_N(\ell)\rangle=\int_0^1S_N(\ell)\,dx_0=\sum_{j=1}^N\sum_{n\in\mathbb{Z}}\int_0^1\chi\left(\frac{x_j-x_0+n}{\ell}\right)\,dx_0=L.
\]
The number variance is defined by
\begin{equation}
\Sigma_N^2(L):=\langle (S_N(\ell)-L)^2\rangle=\langle S_N(\ell)^2\rangle-L^2.
\label{eq:Num Var}\end{equation}
For a sequence of the form $ x_n = \alpha a_n $, we denote the number variance by $ \Sigma_N^2(L,\alpha) $ to indicate the dependence on $ \alpha $. We now state our main result, namely, that the number variance is Poissonian for almost all $ \alpha $ in a suitable regime which depends on the additive energy. To simplify, we restrict to the case $ L=N^\beta$, $0\le \beta<1/2 $, though a similar result for more general functions $ L=L(N) $  may be derived by the same method under a mild condition on the oscillations of $ L $ as in \cite{jr:Yesha}.

\begin{thm}Let $ \mathcal{A}=(a_n)_{n=1}^\infty $ be a sequence of distinct integers, and let $L= N^\beta$ with $ 0\le \beta <1/2 $. Assume that $E_N(\mathcal{A})\ll N^{5/2-\varepsilon}/L$ for some $ \varepsilon>0 $. Then as $N\to\infty$,
\begin{equation}\label{eq:PoissonianNV}
    \Sigma_N^2(L,\alpha)=L+o(L)
\end{equation}
for almost all $\alpha\in [0,1]$.
\label{thm: main}\end{thm}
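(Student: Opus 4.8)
The plan is to pass to a Fourier expansion, recognise $L$ as the $\alpha$-average of the number variance, and then establish concentration through a second-moment estimate governed by the additive energy. First I would expand $\chi_\ell$ in its Fourier series, $\chi_\ell(x)=\sum_{m\in\mathbb Z}\widehat\chi_\ell(m)e^{2\pi i m(x-x_0)}$ with $\widehat\chi_\ell(m)=\ell\,\tfrac{\sin(\pi\ell m)}{\pi\ell m}$; integrating $S_N(\ell)^2$ over the random centre $x_0$ and using orthogonality gives
\[
\Sigma_N^2(L,\alpha)=\sum_{m\neq0}|\widehat\chi_\ell(m)|^2\Big|\sum_{j=1}^N e^{2\pi i m\alpha a_j}\Big|^2 .
\]
Equivalently, writing $\Delta_\ell(\theta)=\sum_{m}|\widehat\chi_\ell(m)|^2 e^{2\pi i m\theta}=\max(\ell-\|\theta\|,0)$ for the periodic self-correlation (tent function) of the interval, this is $\Sigma_N^2(L,\alpha)=\sum_{j,k=1}^N\big(\Delta_\ell(\alpha(a_j-a_k))-\ell^2\big)$, whose diagonal $j=k$ contributes $N(\ell-\ell^2)=L-L^2/N$.

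Next I would compute the mean over $\alpha\in[0,1]$: for each nonzero integer $D$, equidistribution of $\alpha D\bmod 1$ gives $\int_0^1\Delta_\ell(\alpha D)\,d\alpha=\int_0^1\Delta_\ell(\theta)\,d\theta=\ell^2$, so every off-diagonal term has zero $\alpha$-mean and $\int_0^1\Sigma_N^2(L,\alpha)\,d\alpha=N(\ell-\ell^2)=L-L^2/N=L+o(L)$, since $\beta<1/2$ forces $L^2/N=o(L)$. Thus the Poissonian value $L$ is exactly the $\alpha$-average, and it remains to prove concentration. For this, set $r(D)=\#\{(j,k)\in[1,N]^2:j\neq k,\ a_j-a_k=D\}$, so $\sum_D r(D)=N(N-1)$ and $\sum_D r(D)^2\le E_N(\mathcal A)$, and let $R_N(\alpha)=\Sigma_N^2(L,\alpha)-(L-L^2/N)$ denote the off-diagonal remainder.

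Expanding the square and integrating in $\alpha$ reduces the second moment to
\[
\int_0^1 R_N(\alpha)^2\,d\alpha=\sum_{D_1,D_2\neq0}r(D_1)r(D_2)\,I(D_1,D_2),\qquad I(D_1,D_2)=\sum_{\substack{m,m'\neq0\\ mD_1=m'D_2}}|\widehat\chi_\ell(m)|^2|\widehat\chi_\ell(m')|^2 .
\]
On the diagonal $|D_1|=|D_2|$ the constraint forces $m=\pm m'$, so $I(D,\pm D)=\sum_{m\neq0}|\widehat\chi_\ell(m)|^4\ll\ell^3$; hence the diagonal contribution is $\ll E_N(\mathcal A)\,\ell^3=E_N(\mathcal A)L^3/N^3$, which under $E_N(\mathcal A)\ll N^{5/2-\varepsilon}/L$ is $\ll L^2 N^{-1/2-\varepsilon}=o(L^2)$.

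The off-diagonal terms $|D_1|\neq|D_2|$ are the crux and the step I expect to be hardest. Writing $g=\gcd(D_1,D_2)$, $\delta_i=D_i/g$, the constraint $mD_1=m'D_2$ forces $m=\delta_2 s$, $m'=\delta_1 s$, and summing the geometric decay of $|\widehat\chi_\ell|^2$ over $s$ yields a bound of the shape $I(D_1,D_2)\ll \ell^3\,\gcd(D_1,D_2)/\max(|D_1|,|D_2|)$, with faster decay once $\max(|\delta_1|,|\delta_2|)\gg 1/\ell$. A termwise estimate is hopeless: using $|I|\le\ell^3$ gives $\ell^3(\sum_D r(D))^2\asymp L^3N$, while the Gram-matrix Cauchy--Schwarz in $(m,m')$ gives only $E_N(\mathcal A)\ell^2\ge L^2$, both too large; one must genuinely exploit the $\gcd$-decay by writing $\gcd=\sum_e\phi(e)\mathbf 1_{e\mid D_1}\mathbf 1_{e\mid D_2}$ and estimating the resulting congruence sums $\sum_{e\mid D}r(D)(\cdots)$ against the additive energy. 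This is precisely where the exponent $5/2$ (rather than the $3$ available from the diagonal alone) is forced, and the outcome should again be a power saving $o(L^2)$. Combining the two estimates gives $\int_0^1(\Sigma_N^2(L,\alpha)-L)^2\,d\alpha\ll L^2N^{-\delta}$ for some $\delta>0$; choosing a subsequence $N_k$ with $\sum_k N_k^{-\delta}<\infty$ and $N_{k+1}/N_k\to1$, Chebyshev and Borel--Cantelli give $\Sigma_{N_k}^2(L,\alpha)=L+o(L)$ for almost all $\alpha$, and a sandwiching argument between consecutive $N_k$ using the slow variation of $L=N^\beta$, exactly as in \cite{jr:Yesha}, upgrades this to all $N$ and completes the proof.
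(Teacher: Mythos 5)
Your overall architecture coincides with the paper's: expand in Fourier series, identify $L-L^2/N$ as the $\alpha$-mean of the number variance, bound the second moment of the fluctuation by a bilinear GCD-type sum weighted by the representation numbers $r(D)$, and finish with Chebyshev, Borel--Cantelli along a polynomially spaced subsequence, and a sandwiching argument. Your diagonal estimate $I(D,\pm D)\ll\ell^3$ and its consequence $\ll E_N(\mathcal A)L^3/N^3=o(L^2)$ are correct and correspond exactly to the paper's Lemma stating $\sum_{n\ne0}\widehat\Delta(an)^2<1/|a|$.

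The genuine gap is precisely where you flag it: the off-diagonal sum $\sum_{|D_1|\ne|D_2|}r(D_1)r(D_2)\,\ell^3\gcd(D_1,D_2)/\sqrt{|D_1D_2|}$ is asserted but not shown to be $o(L^2)$, and the route you sketch would not close for general sequences of distinct integers. Writing $\gcd(D_1,D_2)=\sum_{e}\phi(e)\mathbf 1_{e\mid D_1}\mathbf 1_{e\mid D_2}$ turns the sum into $\sum_e\frac{\phi(e)}{e}\bigl(\sum_{e\mid D}r(D)\sqrt{e/|D|}\bigr)^2$, and any Cauchy--Schwarz treatment of the inner sum produces factors such as $\sum_{1\le |D'|\le\max|D|/e}1/|D'|\asymp\log\bigl(\max_{i,j}|a_i-a_j|\bigr)$ and divisor functions $d(|D|)$, i.e., quantities controlled by the \emph{size} of the integers $a_n$ rather than by $N$. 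Since the theorem assumes only that the $a_n$ are distinct (lacunary sequences are a target application), these quantities can be super-polynomial in $N$, so the elementary Euler-$\phi$ decomposition loses more than the admissible $N^{\varepsilon}$. The paper closes this step by invoking the Dyer--Harman bound for GCD sums with exponent $1/2$ (a G\'al-type theorem), namely $\sum_{r,s}c_rc_s\gcd(w_r,w_s)/\sqrt{|w_rw_s|}\ll\sum_rc_r^2\exp(10\log r/\log\log(r+1))$, whose essential feature is that it depends only on the \emph{number} of terms ($\le N^2$ here) and not on the magnitudes $w_r$; this nontrivial external input is the missing idea, and it reduces the off-diagonal sum to $N^{o(1)}\sum_Dr(D)^2\le N^{o(1)}E_N(\mathcal A)$. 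A small additional correction: the exponent $5/2$ is not forced by the off-diagonal estimate, which yields $N^{o(1)}E_N(\mathcal A)$ regardless; it arises from demanding summability of the resulting second-moment bound along the subsequence $N_m=m^2$ in the Borel--Cantelli step.
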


For sequences with small additive energy, e.g., $ a_n = n^d, d\ge2$ or integer-valued lacunary sequences, we immediately conclude:

\begin{cor}Let $ \mathcal{A}=(a_n)_{n=1}^\infty $ be a sequence of distinct integers, and let $ L= N^\beta$ with $ 0\le \beta <1/2 $. If $ E_N(\mathcal{A}) \ll N^{2+o(1)} $, then \[  \Sigma_N^2(L,\alpha)=L+o(L) \]for almost all $\alpha\in [0,1]$.
\end{cor}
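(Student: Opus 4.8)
The corollary is immediate: if $E_N(\mathcal A)\ll N^{2+o(1)}$ then, since $L=N^\beta$ with $\beta<1/2$, we have $E_N(\mathcal A)\ll N^{5/2-\varepsilon}/L$ for some $\varepsilon>0$, so Theorem~\ref{thm: main} applies; the content is the theorem. The plan is to Fourier-expand the periodic weight, isolate a diagonal contribution that reproduces the Poissonian value $L$, and control the remainder by a variance estimate in $\alpha$ driven by the additive energy. Writing $e(x)=e^{2\pi i x}$ and $\hat\chi(\xi)=\int_{\mathbb R}\chi(y)e(-\xi y)\,dy=\frac{\sin\pi\xi}{\pi\xi}$, Poisson summation gives $\chi_\ell(x)=\ell\sum_{m\in\mathbb Z}\hat\chi(\ell m)\,e(m(x-x_0))$; putting $x_j=\alpha a_j$, integrating $(S_N(\ell)-L)^2$ over $x_0\in[0,1]$ and using orthogonality yields
\[
\Sigma_N^2(L,\alpha)=\ell^2\sum_{m\neq 0}\hat\chi(\ell m)^2\sum_{t\in\mathbb Z}r(t)\,e(mt\alpha),\qquad r(t):=\#\{(i,j)\in[1,N]^2:a_i-a_j=t\},
\]
so that $r(0)=N$ and $\sum_t r(t)^2=E_N(\mathcal A)$. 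Since $\chi*\chi$ is the triangle function supported on $[-1,1]$, Poisson summation gives the exact identity $\sum_{m\in\mathbb Z}\hat\chi(\ell m)^2=1/\ell$ for $0<\ell\le 1$; hence the $t=0$ contribution equals $\ell^2N(1/\ell-1)=L(1-\ell)=L+o(L)$, the Poissonian main term. It remains to show that the off-diagonal remainder
\[
R_N(\alpha):=\ell^2\sum_{m\neq 0}\hat\chi(\ell m)^2\sum_{t\neq 0}r(t)\,e(mt\alpha)
\]
is $o(L)$ for almost every $\alpha$.

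The first moment vanishes, $\int_0^1 R_N(\alpha)\,d\alpha=0$, as every term has $mt\neq 0$; the crux is the second moment. Expanding and integrating in $\alpha$ imposes the relation $m_1t_1=m_2t_2$, giving
\[
\int_0^1 R_N(\alpha)^2\,d\alpha=\ell^4\sum_{\substack{m_1,m_2\neq 0,\ t_1,t_2\neq 0\\ m_1t_1=m_2t_2}}\hat\chi(\ell m_1)^2\hat\chi(\ell m_2)^2\,r(t_1)r(t_2).
\]
The diagonal-in-frequency terms $m_1=\pm m_2$ force $t_1=\pm t_2$ and, using $r(-t)=r(t)$ together with $\sum_m\hat\chi(\ell m)^4\le\sum_m\hat\chi(\ell m)^2=1/\ell$, reproduce the additive energy with the correct weight: they contribute $\ll\ell^3\sum_t r(t)^2\le\ell^3E_N(\mathcal A)$. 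For the terms with $|m_1|\neq|m_2|$ I would set $g=\gcd(m_1,m_2)$, $m_1=gu$, $m_2=gv$ with $\gcd(u,v)=1$, turning the constraint into $u(a_i-a_j)=v(a_k-a_l)$ and the inner geometric sum into $\sum_n r(vn)r(un)$; the decay of $\hat\chi$ confines the frequencies to $|m|\ll 1/\ell\ll N$, so the resulting weighted gcd/divisor sums range over integers of size $\ll N$, and a careful estimate of these sums (via the arithmetic--geometric mean inequality and divisor bounds) should keep this part of lower order, $\ll N^{o(1)}\ell^3E_N(\mathcal A)$. Altogether
\[
\int_0^1 R_N(\alpha)^2\,d\alpha\ll N^{o(1)}\ell^3E_N(\mathcal A)\ll L^2N^{-1/2-\varepsilon+o(1)}
\]
under $E_N(\mathcal A)\ll N^{5/2-\varepsilon}/L$. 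This estimate, in particular the treatment of the $|m_1|\neq|m_2|$ sums, is the main analytic step.

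Chebyshev's inequality and the Borel--Cantelli lemma then give $R_{N_k}(\alpha)=o(L_k)$ for almost every $\alpha$ along any subsequence $N_k\asymp k^A$ with $\sum_k N_k^{-1/2-\varepsilon}<\infty$, i.e. $A>2/(1+2\varepsilon)$. The remaining, and I expect hardest, step is to bridge from $N_k$ to all $N$: for $N_k\le N<N_{k+1}$ one must control $\max_N|R_N(\alpha)-R_{N_k}(\alpha)|$ for almost every $\alpha$, and separately the drift of the weights caused by $\ell=N^{\beta-1}$ varying across the block. I would bound $\int_0^1\max_{N_k\le N<N_{k+1}}|R_N-R_{N_k}|^2\,d\alpha$ by a maximal inequality: telescoping and Cauchy--Schwarz bound it by $(N_{k+1}-N_k)\asymp N_k^{1-1/A}$ times the sum over the block of the one-point increments $\int_0^1|R_{N+1}-R_N|^2\,d\alpha\ll N^{o(1)}\ell^3N$, the latter because adjoining a single $a_{N+1}$ alters $r$ only on $\ll N$ differences and with bounded multiplicity. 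Summability of the resulting maximal bound in $k$ requires $A<1/\beta$, whereas Borel--Cantelli required $A>1/(3-c)$ where $E_N(\mathcal A)\ll N^{c}/L$; these windows overlap once $c<3-\beta$, and uniformly over all $\beta<1/2$ this forces $c\le 3-\tfrac12=\tfrac52$. This is why the theorem assumes $E_N(\mathcal A)\ll N^{5/2-\varepsilon}/L$ rather than the $N^{3-\varepsilon}/L$ that would suffice at a single scale $N$. Combining the bridged estimate with the exact diagonal yields \eqref{eq:PoissonianNV} for almost every $\alpha$.
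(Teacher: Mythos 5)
Your opening sentence is exactly the paper's proof of this corollary: with $L=N^\beta$ and $\beta<1/2$, the hypothesis $E_N(\mathcal{A})\ll N^{2+o(1)}$ gives $E_N(\mathcal{A})\ll N^{5/2-\varepsilon}/L$ for any $0<\varepsilon<1/2-\beta$, so Theorem \ref{thm: main} applies and nothing more is needed. The sketch of the theorem you append is not required for the corollary, but for the record it follows the paper's route --- the same Fourier decomposition (your $\hat\chi(\ell m)^2$ is the paper's $\widehat{\Delta}(Lm/N)$ since $\Delta=\chi*\chi$), the same diagonal term $L(1-\ell)$, and the same second-moment bound $\ll N^{o(1)}\ell^3E_N(\mathcal{A})$ for the off-diagonal part, which the paper makes precise via Cauchy--Schwarz (Lemma \ref{lm: gcd}) and the Dyer--Harman gcd-sum bound rather than the divisor-sum estimate you leave as a claim. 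The one genuine divergence is the interpolation step: the paper needs no maximal inequality over the blocks $N_{m-1}\le N<N_m$, because $\Delta\ge0$ makes $N\,R_N^2(L,\alpha,\Delta)$ monotone in $N$ at fixed window size $\ell$, giving the direct sandwich $R_N^2(L,\alpha,\Delta)\le (N_m/N)\,R_{N_m}^2(LN_m/N,\alpha,\Delta)$ and its counterpart from below; the exponent $5/2$ then arises, much as you guessed, from balancing summability along $N_m=m^2$ against the requirement $L_m-L_{m-1}=o(1)$, i.e.\ $2\beta<1$.
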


\subsection{The pair correlation function}

For a compactly supported function $f$ and $ L>0 $, the pair correlation function of a sequence $(x_n)_{n=1}^\infty$ is defined by
\begin{equation}
R_N^2(L,f):=\frac{1}{N}\sum_{1\leq i\neq j\leq N}\sum_{m\in\mathbb{Z}}f\left(\frac{x_i-x_j+m}{\ell}\right).
\label{def:Pair Cor}\end{equation}
As usual, for sequences of the form $ x_n = \alpha a_n $, we will stress the $ \alpha $ dependence and denote the pair correlation function by $ R_N^2(L,\alpha ,f) $.
If $ L$ is not fixed but rather $ L=L(N)\to \infty $ as $ N\to \infty $, e.g., $ L = N^\beta $ with $ 0<\beta<1 $, we will call $  R_N^2(L,\alpha ,f) $ the \emph{long-range pair correlation function}. In Section \ref{sec:NumberVariance}, we will see that the condition \eqref{eq:PoissonianNV} is equivalent to 
\begin{equation}
\label{eq:PoissonianPC} R_N^2(L,\alpha,\Delta) = L +o(1)
\end{equation}
where $ \Delta $ is the tent function defined by
\begin{equation}\label{eq:DeltaDef}
\Delta(x):=\int_\mathbb{R}\chi(x+x_0)\chi(x_0)\,dx_0=\int_{-\frac{1}{2}}^\frac{1}{2}\chi(x+x_0)\,dx_0=\max\{1-|x|,0\}.
\end{equation}
We thus see that a weaker notion of Poissonian long-range pair correlation in the sense of $ R_N^2(L,\alpha,f) = L +o(L) $, as established, e.g., in \cite{jr:Hille, jr:Lutsko, jr:TechnauWalker}, would be insufficient for deriving our results on Poissonian number variance.

\begin{figure}[htp!]
	\centering
	\begin{tikzpicture}
	\draw[->](-2,0)--(2,0)node[right]{$x$};
	\draw[->](0,-0.05)--(0,1.7)node[right]{$y$};
	\foreach \x in {-1,0,1}{\draw(\x,-0.1)--(\x,0.1)node[below,outer sep=5pt]at(\x,0){\x};}
	\foreach \y in {1}{\draw(-0.1,\y)--(0.1,\y)node[left,outer sep=5pt]at(0,\y){\y};}
	\draw[color=black, thick,smooth,domain=0:1]plot(\x,1-\x);
	\draw[color=black, thick,smooth,domain=-1.8:-1]plot(\x,0);
	\draw[color=black, thick,smooth,domain=-1:0]plot(\x,1+\x);
	\draw[color=black, thick,smooth,domain=1:1.8]plot(\x,0);
	\put(20,20){$\Delta(x)$}
	\end{tikzpicture}
	\caption{The graph of the tent function $\Delta$.}
\end{figure}
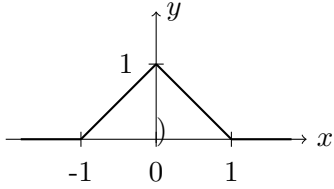

\section{The number variance}
\label{sec:NumberVariance}

In this section, we go along the lines of \cite{jr:Marklof} to recall how the number variance can be expressed in terms of the pair correlation function \eqref{def:Pair Cor}. First, note that
\[
\begin{aligned}
    \langle S_N(\ell)^2\rangle &=\sum_{i,j=1}^N\sum_{m,n\in\mathbb{Z}}\int_0^1\chi\left(\frac{x_i-x_0+m}{\ell}\right)\chi\left(\frac{x_j-x_0+n}{\ell}\right)\,dx_0 \\
    &=\sum_{i,j=1}^N\sum_{m\in\mathbb{Z}}\int_\mathbb{R}\chi\left(\frac{x_i-x_0+m}{\ell}\right)\chi\left(\frac{x_j-x_0}{\ell}\right)\,dx_0.
\end{aligned}
\]

Recalling the definition \eqref{eq:DeltaDef} of $ \Delta  $ and substituting $x'_0=x_j-\ell x_0$ and $dx'_0=-\ell dx_0$, we obtain
\[
\begin{aligned}
    \Delta\left(\frac{x_i-x_j+m}{\ell}\right) &=\int_\mathbb{R}\chi\left(\frac{x_i+\ell x_0-x_j+m}{\ell}\right)\chi(x_0)\,dx_0 \\
    &=\frac{1}{\ell}\int_\mathbb{R}\chi\left(\frac{x_i-x_0'+m}{\ell}\right)\chi\left(\frac{x_j-x_0'}{\ell}\right)\,dx_0'
\end{aligned}    
\]
so that
\begin{equation}
    \langle S_N(\ell)^2\rangle=\ell\sum_{i,j=1}^N\sum_{m\in\mathbb{Z}}\Delta\left(\frac{x_i-x_j+m}{\ell}\right).
\label{eq:E(S_N^2)}\end{equation}

For any non-zero integer $m$ and $0<\ell\leq1$, it is easy to see that $\left|\frac{m}{\ell}\right|\geq 1$ and therefore
\[
    \Delta\left(\frac{m}{\ell}\right)=\max\left\{1-\left|\frac{m}{\ell}\right|,0\right\}=
    \begin{cases}
        1, & m=0 \\
        0, & m\neq 0.
    \end{cases}
\]
Thus, the diagonal terms in equation \eqref{eq:E(S_N^2)} can be easily evaluated and are equal to
\begin{equation}
    \ell\sum_{i=1}^N\sum_{m\in\mathbb{Z}}\Delta\left(\frac{m}{\ell}\right)=L.
\label{eq:Sum Delta}\end{equation}

The Fourier transform of the tent function $\Delta(x)$ is 
\[
    \widehat{\Delta}(x) = \int_\mathbb{R}\Delta(y)e(-xy)\,dy = \int_{-1}^1\left(1-|y|\right)e^{-2\pi ixy}\,dy=
    \begin{cases} 
    1, & x=0 \\
    \frac{\sin^2{\pi x}}{\pi^2x^2}, & x \neq 0
    \end{cases}
\]
where we used the standard notation $e(z):=e^{2\pi iz}$. An application of the Poisson summation formula yields
\[
    R_N^2(L,\alpha,\Delta)=\frac{L}{N^2}\sum_{n\in\mathbb{Z}}\widehat{\Delta}\left(\frac{Ln}{N}\right)\sum_{1\leq i\neq j\leq N}e(n\alpha(a_i-a_j));
\]
combining with \eqref{eq:Num Var}, \eqref{def:Pair Cor} and \eqref{eq:Sum Delta}, we derive the identity
\[
    \Sigma_N^2(L,\alpha)=L-L^2+LR_N^2(L,\alpha,\Delta).
\]

We thus conclude that the asymptotics \eqref{eq:PoissonianNV} and \eqref{eq:PoissonianPC} are equivalent.

\section{Bounding the variance}

In what follows, we denote by $ \mathcal{A} = (a_n)_{n=1}^\infty $ a sequence of distinct integers. As a function of $\alpha\in \mathbb{R}$, the function $R_N^2(L,\alpha,\Delta)$ is periodic and its Fourier expansion can be expressed by
\[
    R_N^2(L,\alpha,\Delta)=\sum_{k\in\mathbb{Z}}b_{k,N}(L)e(k\alpha).
\]
For $k\not=0$, the Fourier coefficients can be written as
\[
\begin{aligned}
    b_{k,N}(L)&=\int_0^1R_N^2(L,\alpha,\Delta)e(-k\alpha)\,d\alpha\\
    &=\frac{L}{N^2}\sum_{n\in\mathbb{Z}}\widehat{\Delta}\left(\frac{Ln}{N}\right)\sum_{1\leq i\not=j\leq N}\int_0^1e((na_i-na_j-k)\alpha)\,d\alpha \\
    &=\frac{L}{N^2}\sum_{n\not=0}\sum_{\substack{1\leq i\not=j\leq N\\n(a_i-a_j)=k}}\widehat{\Delta}\left(\frac{Ln}{N}\right),
\end{aligned}
\]
where the last identity follows since the equation $n(a_i-a_j)=k$ with $n=0$ implies $k=0$. The mean of $R_N^2(L,\alpha,\Delta)$ over $\alpha\in[0,1]$ is 
\begin{equation*}
\begin{aligned}
    \langle R_N^2(L,\alpha,\Delta)\rangle&=b_{0,N}(L)=\frac{L}{N^2}\sum_{n\in\mathbb{Z}}\widehat{\Delta}\left(\frac{Ln}{N}\right)\sum_{\substack{1\leq i\not=j\leq N\\n(a_i-a_j)=0}}1 \\
    &=\frac{L}{N^2}\sum_{1\leq i\neq j\leq N}\widehat{\Delta}(0)=L-\frac{L}{N},
\end{aligned}    
\end{equation*}
where the next to last identity holds since the sequence $ \mathcal{A} $ consists of \emph{distinct} integers. To estimate the variance of $R_N^2(L,\alpha,\Delta)$ as a function of $\alpha$, we define 
\[
    X_N(L,\alpha):=R_N^2(L,\alpha,\Delta)-\langle R_N^2(L,\alpha,\Delta)\rangle =\sum_{k\ne 0}b_{k,N}(L)e(k\alpha).   
\]

We will need the following lemma (see \cite{jr:Rudnick} for an analogous estimate with a smooth test function):

\begin{lm}
For any $a\neq0$, we have
\[
\sum_{n\ne0}\widehat{\Delta}(an)^{2}<\frac{1}{|a|}.
\]
\label{lm: 1/a refinement}\end{lm}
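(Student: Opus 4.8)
The plan is to avoid estimating the fourth-power sum $\sum_{n\ne0}\widehat{\Delta}(an)^2$ directly and instead pass to a first-power sum through an elementary pointwise bound together with Poisson summation. First I would record that $0\le\widehat{\Delta}(x)\le1$ for every $x$, which is immediate from $\widehat{\Delta}(x)=\sin^2(\pi x)/(\pi^2x^2)$ and $|\sin(\pi x)|\le\pi|x|$. Consequently $\widehat{\Delta}(an)^2\le\widehat{\Delta}(an)$ for each $n$, so it suffices to prove the stronger-looking bound $\sum_{n\ne0}\widehat{\Delta}(an)<1/|a|$.

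The key step is to evaluate this latter sum exactly via Poisson summation. Since $\Delta$ is continuous, piecewise linear and compactly supported while $\widehat{\Delta}$ decays like $x^{-2}$, the Poisson summation formula applies to the function $t\mapsto\widehat{\Delta}(at)$ and yields
\[
\sum_{n\in\mathbb{Z}}\widehat{\Delta}(an)=\frac{1}{|a|}\sum_{m\in\mathbb{Z}}\Delta\!\left(\frac{m}{a}\right),
\]
using $\widehat{\widehat{\Delta}}=\Delta$ and the evenness of $\Delta$. Peeling off the $n=0$ term (where $\widehat{\Delta}(0)=1$) reduces the claim to showing that $\sum_{m}\Delta(m/a)<|a|+1$.

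This last inequality I would verify by a direct computation. Writing $K=\lceil|a|\rceil-1$ for the largest integer strictly below $|a|$ and using $\Delta(m/a)=\max\{1-|m|/|a|,\,0\}$, the samples with $m\ne0$ that contribute are exactly those with $1\le|m|\le K$, and summing the resulting arithmetic progression gives
\[
\sum_{m\in\mathbb{Z}}\Delta\!\left(\frac{m}{a}\right)=1+2K-\frac{K(K+1)}{|a|}.
\]
After clearing the positive denominator $|a|$, the desired bound $\sum_m\Delta(m/a)<|a|+1$ becomes equivalent to $(|a|-K)^2+K>0$, which holds because $K\ge0$ and $|a|>0$ rule out the simultaneous vanishing of both summands. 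Combining the three steps gives $\sum_{n\ne0}\widehat{\Delta}(an)^2\le\sum_{n\ne0}\widehat{\Delta}(an)<1/|a|$.

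I expect the only genuine subtlety to be the passage through the first-power sum: bounding the fourth-power sum by the square sum is lossy but, crucially, exactly enough, because the square sum telescopes under Poisson summation into the elementary tent-function count above. By contrast, attempting to bound $\sum_{n\ne0}\widehat{\Delta}(an)^2$ by a naive count of the samples of $\Delta*\Delta$ in its support $[-2,2]$ would lose a constant factor and fail to reach the sharp constant $1/|a|$. The strictness of the final inequality is automatic from the strict positivity of $(|a|-K)^2+K$, so no boundary cases—including integer $a$, where both sides vanish—need separate treatment.
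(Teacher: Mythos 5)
Your proof is correct and follows essentially the same route as the paper: the pointwise bound $\widehat{\Delta}(an)^2\le\widehat{\Delta}(an)$ from $0\le\widehat{\Delta}\le1$, Poisson summation to convert the sum into samples of $\Delta$, and an explicit evaluation of $\sum_m\Delta(m/a)$. The only cosmetic difference is that you treat all $a\ne0$ uniformly via $K=\lceil|a|\rceil-1$ and the identity $(|a|-K)^2+K>0$, where the paper splits into the cases $0<a\le1$ and $a>1$.
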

\begin{proof}
Since $\widehat{\Delta}$ is even, we may assume without loss of generality that $a>0$. As $0\le \widehat{\Delta}\leq 1$, we get
\[
\sum_{n\ne0}\widehat{\Delta}(an)^{2}\leq \sum_{n\ne0}\widehat{\Delta}(an).
\]
By the Poisson summation formula, we know that
\[
\sum_{n\in\mathbb{Z}}\widehat{\Delta}(an)=\frac{1}{a}\sum_{m\in\mathbb{Z}}\Delta\left(\frac{m}{a}\right).
\]
For $0<a\leq1$, we have 
\[
\Delta\left(\frac{m}{a}\right)=\begin{cases}
    1, & m=0 \\
    0, & m \neq 0.
\end{cases}
\]
Since $\widehat{\Delta}(0)=1$, we conclude that
\[
\sum_{n\ne0}\widehat{\Delta}(an)^{2}\le\sum_{n \in \mathbb{Z}}\widehat{\Delta}(an) - 1 =\frac{1}{a}\sum_{m\in\mathbb{Z}}\Delta\left(\frac{m}{a}\right)-1=\frac{1}{a}-1<\frac{1}{a}.
\]
For $a>1$, we have
\[
\Delta\left(\frac{m}{a}\right)=\begin{cases}
    1-\tfrac{|m|}{a}, & |m|\leq  a \\
    0, & |m|>a.
\end{cases}
\]
Hence,
\[
\begin{aligned}
    \sum_{n\ne0}\widehat{\Delta}(an)^{2}&\leq\frac{1}{a}\sum_{|m|\leq a}\left(1-\frac{|m|}{a}\right)-1=\frac{1}{a}\left(2\lfloor a\rfloor+1-\frac{\lfloor a\rfloor(\lfloor a\rfloor+1)}{a}\right)-1 \\
    &=\frac{1}{a} \left( \left(1+\lfloor a\rfloor-a\right)+\frac{\lfloor a\rfloor}{a}\left(a-\lfloor a\rfloor-1\right) \right) <\frac{1}{a},
\end{aligned}
\]
where the last inequality holds since $\lfloor a\rfloor \leq a < \lfloor a\rfloor+1$.
\end{proof}

In what follows, we denote
\[
\delta(n) = \begin{cases}
    1, & n=0\\
    0, & n\neq0.
\end{cases}
\]

\begin{lm}
Given non-zero integers $w_r$ and $w_s$, we have
\[
    \frac{L}{N}\sum_{n_1,n_2\neq0}\widehat{\Delta}\left(\frac{Ln_1}{N}\right)\widehat{\Delta}\left(\frac{Ln_2}{N}\right)\delta(n_1w_r-n_2w_s)<\frac{\gcd(w_r,w_s)}{\sqrt{|w_rw_s|}}.
\]    
\label{lm: gcd}\end{lm}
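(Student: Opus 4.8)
The plan is to exploit the Diophantine constraint enforced by $\delta(n_1 w_r - n_2 w_s)$ to collapse the double sum into a single sum, and then to reduce matters to Lemma~\ref{lm: 1/a refinement} via the Cauchy--Schwarz inequality. First I would set $g = \gcd(w_r, w_s)$ and write $w_r = g u_r$, $w_s = g u_s$ with $\gcd(u_r, u_s) = 1$. The summand is supported on the solutions of $n_1 w_r = n_2 w_s$, equivalently $n_1 u_r = n_2 u_s$; since $u_r$ and $u_s$ are coprime this forces $u_s \mid n_1$ and $u_r \mid n_2$, so the nonzero solutions are precisely $(n_1, n_2) = (u_s t, u_r t)$ with $t \in \mathbb{Z} \setminus \{0\}$. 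This rewrites the left-hand side as
\[
\frac{L}{N} \sum_{t \neq 0} \widehat{\Delta}\left(\frac{L u_s t}{N}\right) \widehat{\Delta}\left(\frac{L u_r t}{N}\right).
\]

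Next, since $\widehat{\Delta} \ge 0$, I would apply Cauchy--Schwarz to separate the two factors:
\[
\sum_{t \neq 0} \widehat{\Delta}\left(\frac{L u_s t}{N}\right) \widehat{\Delta}\left(\frac{L u_r t}{N}\right) \le \left( \sum_{t \neq 0} \widehat{\Delta}\left(\frac{L u_s t}{N}\right)^{2} \right)^{1/2} \left( \sum_{t \neq 0} \widehat{\Delta}\left(\frac{L u_r t}{N}\right)^{2} \right)^{1/2}.
\]
Each factor is now exactly of the form treated in Lemma~\ref{lm: 1/a refinement}: applying it with $a = L u_s / N$ and $a = L u_r / N$ (both nonzero) gives the bounds $N/(L|u_s|)$ and $N/(L|u_r|)$ respectively, the strict inequality being inherited directly from that lemma.

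Finally, combining these estimates yields
\[
\frac{L}{N} \sum_{t \neq 0} \widehat{\Delta}\left(\frac{L u_s t}{N}\right) \widehat{\Delta}\left(\frac{L u_r t}{N}\right) < \frac{L}{N}\cdot\frac{N}{L\sqrt{|u_r u_s|}} = \frac{1}{\sqrt{|u_r u_s|}},
\]
and since $|u_r u_s| = |w_r w_s|/g^{2}$, the right-hand side equals $g/\sqrt{|w_r w_s|} = \gcd(w_r, w_s)/\sqrt{|w_r w_s|}$, as required. I do not anticipate a genuine obstacle here: the only point demanding care is the parametrization of the solution set in the first step, namely correctly identifying that coprimality of $u_r$ and $u_s$ pins down all nonzero solutions as the integer multiples of $(u_s, u_r)$. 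After that the estimate follows mechanically from the preceding lemma, and the strict inequality in the conclusion is preserved throughout.
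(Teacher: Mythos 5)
Your proposal is correct and follows essentially the same route as the paper: parametrize the solution set of $n_1w_r=n_2w_s$ via the gcd to collapse to a single sum, apply Cauchy--Schwarz, and invoke Lemma~\ref{lm: 1/a refinement} on each factor. The only difference is cosmetic notation ($u_r=w_r/g$ versus the paper's $w_r/d$ written inline), so there is nothing to add.
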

\begin{proof}
Denote the greatest common divisor of $w_r$ and $w_s$ by $d$. The equation $n_1w_r=n_2w_s$ implies that $n_1=n_0w_s/d$ and $n_2=n_0w_r/d$ for some integer $n_0\neq0$. Re-writing the double summation as a single summation, we get
\[
    \frac{L}{N}\sum_{n_1,n_2\neq0}\widehat{\Delta}\left(\frac{Ln_1}{N}\right)\widehat{\Delta}\left(\frac{Ln_2}{N}\right)\delta(n_1w_r-n_2w_s)=\frac{L}{N}\sum_{n_0\neq0}\widehat{\Delta}\left(\frac{Lw_rn_0}{Nd}\right)\widehat{\Delta}\left(\frac{Lw_sn_0}{Nd}\right).
\]
Using the Cauchy-Schwarz inequality, 
\[
    \sum_{n_0\neq0}\widehat{\Delta}\left(\frac{Ln_0w_s}{Nd}\right)\widehat{\Delta}\left(\frac{Ln_0w_r}{Nd}\right)\leq\left(\sum_{n_0\neq0}\widehat{\Delta}\left(\frac{Lw_rn_0}{Nd}\right)^2\right)^{\frac{1}{2}}\left(\sum_{n_0\neq0}\widehat{\Delta}\left(\frac{Lw_sn_0}{Nd}\right)^2\right)^{\frac{1}{2}}.
\]
By Lemma \ref{lm: 1/a refinement}, we conclude that
\[
    \sum_{n_0\neq0}\widehat{\Delta}\left(\frac{Ln_0w_s}{Nd}\right)\widehat{\Delta}\left(\frac{Ln_0w_r}{Nd}\right)<\frac{Nd}{L\sqrt{|w_rw_s|}},
\]
which gives the desired bound.
\end{proof}

We are now ready to estimate the second moment of $ X_N(L,\alpha)$:

\begin{prp}
The second moment of $X_N(L,\alpha)$ is
\[
    \langle X_N(L,\alpha)^2\rangle=\int_0^1\big|X_N(L,\alpha)\big|^2\,d\alpha\ll_\varepsilon LN^{-3+\varepsilon}E_N(\mathcal{A})
\]
for any $\varepsilon>0$.
\label{prp: X_N,L bound with E_N(A)}\end{prp}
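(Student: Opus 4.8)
The plan is to begin from Parseval's identity applied to the Fourier expansion $X_N(L,\alpha)=\sum_{k\ne0}b_{k,N}(L)e(k\alpha)$, so that
\[
\langle X_N(L,\alpha)^2\rangle=\int_0^1|X_N(L,\alpha)|^2\,d\alpha=\sum_{k\ne0}|b_{k,N}(L)|^2,
\]
and then to open up the square of each Fourier coefficient using the formula for $b_{k,N}(L)$ already derived. Expanding $|b_{k,N}(L)|^2$ produces a fourfold sum over pairs $(i,j)$ and $(r,s)$ with $i\ne j$, $r\ne s$, together with a double sum over $n_1,n_2\ne0$ carrying the weights $\widehat\Delta(Ln_1/N)\widehat\Delta(Ln_2/N)$ and the constraints $n_1(a_i-a_j)=k$ and $n_2(a_r-a_s)=k$. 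Summing over $k\ne0$, and observing that $k=n_1(a_i-a_j)=n_2(a_r-a_s)$ is automatically non-zero (the $a_n$ are distinct and $n_1,n_2\ne0$), the constraint collapses to a factor $\delta\big(n_1(a_i-a_j)-n_2(a_r-a_s)\big)$, giving
\[
\langle X_N(L,\alpha)^2\rangle=\frac{L^2}{N^4}\sum_{\substack{i\ne j\\ r\ne s}}\;\sum_{n_1,n_2\ne0}\widehat\Delta\!\left(\frac{Ln_1}{N}\right)\widehat\Delta\!\left(\frac{Ln_2}{N}\right)\delta\big(n_1(a_i-a_j)-n_2(a_r-a_s)\big).
\]

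Next I would feed the inner $(n_1,n_2)$-sum, for each fixed quadruple, into Lemma \ref{lm: gcd} with $w_r=a_i-a_j$ and $w_s=a_r-a_s$ (both non-zero). This bounds it by $\tfrac{N}{L}\cdot\gcd(a_i-a_j,a_r-a_s)/\sqrt{|(a_i-a_j)(a_r-a_s)|}$ and yields
\[
\langle X_N(L,\alpha)^2\rangle<\frac{L}{N^3}\,T,\qquad T:=\sum_{\substack{i\ne j\\ r\ne s}}\frac{\gcd(a_i-a_j,\,a_r-a_s)}{\sqrt{|(a_i-a_j)(a_r-a_s)|}}.
\]
Grouping the pairs by their common difference and writing $r(u):=\#\{(i,j):1\le i,j\le N,\ a_i-a_j=u\}$, this is the symmetric gcd-bilinear form $T=\sum_{u,v\ne0}r(u)r(v)\gcd(u,v)/\sqrt{|u||v|}$. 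Since $\tfrac{L}{N^3}\cdot N^\varepsilon E_N(\mathcal A)$ is exactly the target, the proposition reduces to the clean estimate $T\ll_\varepsilon N^\varepsilon E_N(\mathcal A)$.

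To prove this I would use that $E_N(\mathcal A)=\sum_u r(u)^2$ controls the $\ell^2$-mass of the weights; in particular $\sum_{u\ne0}r(u)^2=E_N(\mathcal A)-N^2\le E_N(\mathcal A)$. The diagonal terms $|v|=|u|$ contribute exactly $2(E_N(\mathcal A)-N^2)$, already within budget, so the content is entirely in the off-diagonal terms. Here the key structural fact is that $T$ is a gcd-bilinear form supported on the set $\mathcal D$ of realized differences, which has at most $N(N-1)<N^2$ elements. I would bound $T$ by the operator norm of the gcd matrix $\big(\gcd(u,v)/\sqrt{|u||v|}\big)_{u,v\in\mathcal D}$ times $\sum_u r(u)^2$, and invoke a Gál-type estimate: for any finite set of $K$ non-zero integers this operator norm is $\ll_\varepsilon K^\varepsilon$ (the kind of gcd-sum bound underlying the additive-energy criterion of \cite{jr:AistleitnerLarcherLewko}). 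With $K<N^2$ this gives $T\ll_\varepsilon N^\varepsilon\sum_u r(u)^2\ll_\varepsilon N^\varepsilon E_N(\mathcal A)$. Equivalently one can write $T=\sum_d\varphi(d)\big(\sum_{d\mid u}r(u)/\sqrt{|u|}\big)^2$ via $\gcd(u,v)=\sum_{d\mid\gcd(u,v)}\varphi(d)$ and estimate this diagonalized sum directly.

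The main obstacle is precisely this last step. A naive Cauchy--Schwarz applied to $\sum_{d\mid u}r(u)/\sqrt{|u|}$ forces in a divisor-function factor $\tau(u)$ and only yields $T\ll N^\varepsilon\sum_u r(u)^2\tau(u)$, which is \emph{false} in general: a single highly composite difference realized many times inflates $\sum_u r(u)^2\tau(u)$ far beyond $N^\varepsilon E_N(\mathcal A)$, even though its genuine contribution to $T$ is suppressed by the $1/\sqrt{|u|}$ weights. Exploiting the near-orthogonality encoded in $\gcd(u,v)/\sqrt{|u||v|}$ — rather than bounding these correlations termwise — is the crux, and is exactly where the Gál-type input enters and why an $N^\varepsilon$ (rather than a power of $\log$) appears in the statement.
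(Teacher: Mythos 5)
Your proposal is correct and follows essentially the same route as the paper: Parseval, the reduction of the inner $(n_1,n_2)$-sum to the bilinear gcd form via Lemma \ref{lm: gcd}, and then the Gál-type bound on the operator norm of the gcd matrix over the (at most $N^2$) realized differences, combined with $\sum_u r(u)^2\le E_N(\mathcal A)$. The Gál-type input you invoke as a black box is exactly what the paper takes from \cite{jr:DyerHarman}, which yields the factor $\exp(10\log r/\log\log(r+1))\ll_\varepsilon N^\varepsilon$.
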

\begin{proof}
Set \[W_N(w,\mathcal{A})=\#\{1\leq i\neq j\leq N: a_i-a_j=w\}\] and \[\widetilde{W}_N(n_1,n_2,\mathcal{A})=\#\{(i,j,k,l)\in[1,N]^4:i\neq j, k\neq l, n_1(a_i-a_j)=n_2(a_k-a_l)\}.\]Note that the differences $a_i-a_j$ for $1\leq i \neq j \leq N$ can have at most $N^2-N$ distinct values. For simplicity, let $w_r$ be $N^2$ distinct integers including all possible differences $a_i-a_j$. As for those redundant values $w_r\neq a_i-a_j$, we have $W_N(w_r,\mathcal{A})=0$. We may therefore re-write
\[      
    \widetilde{W}_N(n_1,n_2,\mathcal{A})=\sum_{\substack{1\leq r\leq N^2 \\ 1\leq s\leq N^2}}W_N(w_r,\mathcal{A})W_N(w_s,\mathcal{A})\delta(n_1w_r-n_2w_s).
\]
Consequently, the second moment of $X_N(L,\alpha)$ is
\[
\begin{aligned}
    \int_0^1\big|X_N(L,\alpha)\big|^2\,d\alpha&=\frac{L^2}{N^4}\sum_{n_1,n_2\neq0}\widehat{\Delta}\left(\frac{Ln_1}{N}\right)\widehat{\Delta}\left(\frac{Ln_2}{N}\right)\sum_{\substack{1\leq i\neq j\leq N \\ 1\leq k\neq l\leq N}}\!\delta(n_1(a_i-a_j)-n_2(a_k-a_l)) \\
    &=\frac{L^2}{N^4}\sum_{n_1,n_2\neq0}\widehat{\Delta}\left(\frac{Ln_1}{N}\right)\widehat{\Delta}\left(\frac{Ln_2}{N}\right)\widetilde{W}_N(n_1,n_2,\mathcal{A}).
\end{aligned}    
\]
By Lemma \ref{lm: gcd} and the bound for $ \gcd $ sums \cite{jr:DyerHarman}, we obtain
\[
\begin{aligned}
    \int_0^1\big|X_N(L,\alpha)\big|^2\,d\alpha&\ll\frac{L}{N^3}\sum_{\substack{1\leq r\leq N^2 \\ 1\leq s\leq N^2}}W_N(w_r,\mathcal{A})W_N(w_s,\mathcal{A})\frac{\gcd(w_r,w_s)}{\sqrt{|w_rw_s|}} \\
    &\ll \frac{L}{N^3}\sum_{1\leq r\leq N^2}W_N(w_r,\mathcal{A})^2\exp\left(\frac{10\log r}{\log\log (r+1)}\right).
\end{aligned}
\]
Since
\[
\sum_{1\leq r\leq N^2}W_N(w_r,\mathcal{A})^2=\#\{(i,j,k,l)\in[1,N]^4:i\neq j, k\neq l, a_i-a_j=a_k-a_l\}\leq E_N(\mathcal{A}),
\]
and for $1\leq r \leq N^2$ and $\varepsilon>0$ we have
\[
\exp\left(\frac{10\log r}{\log\log (r+1)}\right)\ll_\varepsilon N^\varepsilon,
\]
we conclude that
\[
    \int_0^1\big|X_N(L,\alpha)\big|^2\,d\alpha\ll_\varepsilon LN^{-3+\varepsilon}E_N(\mathcal{A}).
\]

\end{proof}

\section{Proof of Theorem \ref{thm: main}}

We now use the bound on the second moment of $ X_N(L,\alpha) $ to prove almost sure convergence of $ R_N^2(L,\alpha,\Delta) $ as $ N\to\infty $, first along a subsequence, and then along the full sequence. To this end, we follow the arguments of \cite{jr:Yesha}. 

\begin{lm}
Let $L=L(N)$ such that $ 0<L=o(N) $ as $ N\to\infty $.  For any $\delta>0$ and for any $ \varepsilon>0 $, we have
\[
    \meas\{\alpha\in [0,1]: |\Sigma_N^2(L,\alpha)-L|>\delta L\}=O_{\delta,\varepsilon}( LN^{-3+\varepsilon}E_N(\mathcal{A}))\]
    as $N\to\infty$.
    \label{lm:weak_bound}
\end{lm}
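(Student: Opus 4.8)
The plan is to reduce the desired measure estimate to the second-moment bound of Proposition \ref{prp: X_N,L bound with E_N(A)} by way of Chebyshev's inequality. First I would recall the two identities established in Section \ref{sec:NumberVariance} and in the preceding section: the relation $\Sigma_N^2(L,\alpha)=L-L^2+LR_N^2(L,\alpha,\Delta)$, and the mean computation $\langle R_N^2(L,\alpha,\Delta)\rangle=L-L/N$. Writing $R_N^2(L,\alpha,\Delta)=X_N(L,\alpha)+L-L/N$ and substituting, the $L^2$ contributions cancel and one is left with the clean expression
\[
\Sigma_N^2(L,\alpha)-L=LX_N(L,\alpha)-\frac{L^2}{N}.
\]

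Next I would dispose of the deterministic error term. Since $L=o(N)$, we have $L^2/N=L\cdot(L/N)=o(L)$, so for all $N$ exceeding a threshold depending on $\delta$ we have $L^2/N<\tfrac{\delta}{2}L$. By the triangle inequality, the event $|\Sigma_N^2(L,\alpha)-L|>\delta L$ then forces $L|X_N(L,\alpha)|>\tfrac{\delta}{2}L$, that is, $|X_N(L,\alpha)|>\delta/2$. Consequently, for all sufficiently large $N$,
\[
\meas\{\alpha\in[0,1]:|\Sigma_N^2(L,\alpha)-L|>\delta L\}\le\meas\{\alpha\in[0,1]:|X_N(L,\alpha)|>\tfrac{\delta}{2}\}.
\]

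Finally I would apply Chebyshev's inequality to the right-hand side (note that $X_N(L,\alpha)$ has mean zero by construction), bounding it by $\tfrac{4}{\delta^2}\langle X_N(L,\alpha)^2\rangle$, and then invoke Proposition \ref{prp: X_N,L bound with E_N(A)} to get $\langle X_N(L,\alpha)^2\rangle\ll_\varepsilon LN^{-3+\varepsilon}E_N(\mathcal{A})$. This yields the asserted bound with an implied constant depending on $\delta$ and $\varepsilon$.

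The main obstacle has in fact already been surmounted: it is precisely the second-moment estimate of Proposition \ref{prp: X_N,L bound with E_N(A)}, whose proof combines the $1/|a|$-type decay of Lemma \ref{lm: 1/a refinement}, the gcd bound of Lemma \ref{lm: gcd}, the Dyer--Harman bound for gcd sums, and the additive-energy input. Granting that proposition, the present lemma is a routine deduction; the only points demanding a little care are the exact cancellation of the $L^2$ terms in the identity above and the verification that the correction $L^2/N$ is negligible against $\delta L$ for large $N$, which is exactly where the hypothesis $L=o(N)$ enters.
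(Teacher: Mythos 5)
Your proposal is correct and follows essentially the same route as the paper: the identity $\Sigma_N^2(L,\alpha)-L=LX_N(L,\alpha)-L^2/N$ is exactly the paper's relation $(\Sigma_N^2(L,\alpha)-L)/L=X_N(L,\alpha)-L/N$, the deterministic term $L/N$ is absorbed using $L=o(N)$ in the same way, and the conclusion follows from Chebyshev's inequality together with Proposition \ref{prp: X_N,L bound with E_N(A)} just as in the paper.
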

\proof Since
\[
    \frac{\Sigma_N^2(L,\alpha)-L}{L}=X_N(L,\alpha)-\frac{L}{N},
\]
we have
\[
\begin{aligned}
    \meas\{\alpha\in [0,1]: |\Sigma_N^2(L,\alpha)-L|>\delta L \}&=\meas\{\alpha\in [0,1]: |X_N(L,\alpha)-L/N|>\delta\} \\
    &\leq \meas\{\alpha\in [0,1]: |X_N(L,\alpha)|>\delta/2\}
\end{aligned}
\]
for sufficiently large $N$ such that $L/N<\delta/2$. Applying Chebyshev's inequality and Proposition \ref{prp: X_N,L bound with E_N(A)}, we deduce that
\[
\begin{aligned}
    \meas\{\alpha\in [0,1]: &|X_N(L,\alpha)|>\delta/2\}\leq 4\delta^{-2}\int_0^1 |X_N(L,\alpha)|^2\, d\alpha \\
    & \ll_{\delta,\varepsilon} LN^{-3+\varepsilon}E_N(\mathcal{A}),
\end{aligned}
\]
which gives the desired estimate.

\qed

\begin{lm}
Let $L=L(N)$ such that $ 0<L=o(N) $ as $ N\to\infty $. Assume that $E_N(\mathcal{A})\ll N^{5/2-\varepsilon}/L$ for some $ \varepsilon>0 $. Let $N_m=m^2$ and let $L_m=L(N_m)$. Then as $m\to\infty$, we have
\[
    R_{N_m}^2(L_m,\alpha,\Delta)=L_m+o(1)
\]
for almost all $\alpha\in [0,1]$.
\label{lm: L_m+o(1)}\end{lm}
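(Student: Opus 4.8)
The plan is to prove almost-sure convergence along the sparse subsequence $N_m=m^2$ by a Chebyshev-type first-moment estimate combined with the Borel--Cantelli lemma. The starting point is the identity recorded in Section~\ref{sec:NumberVariance} together with the relation used in the proof of Lemma~\ref{lm:weak_bound}, namely
\[
R_N^2(L,\alpha,\Delta)-L=X_N(L,\alpha)-\frac{L}{N}=\frac{\Sigma_N^2(L,\alpha)-L}{L}.
\]
Thus proving the asserted asymptotic $R_{N_m}^2(L_m,\alpha,\Delta)=L_m+o(1)$ for almost every $\alpha$ is exactly equivalent (as in the equivalence between \eqref{eq:PoissonianNV} and \eqref{eq:PoissonianPC}) to showing $|\Sigma_{N_m}^2(L_m,\alpha)-L_m|=o(L_m)$ for almost every $\alpha$.

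Next I would feed the subsequence into Lemma~\ref{lm:weak_bound}. For $N=N_m$ and a threshold $\delta>0$, that lemma, applied with an auxiliary exponent $\eta>0$ in place of its $\varepsilon$, gives
\[
\meas\{\alpha\in[0,1]:|\Sigma_{N_m}^2(L_m,\alpha)-L_m|>\delta L_m\}\ll_{\delta,\eta} L_mN_m^{-3+\eta}E_{N_m}(\mathcal{A}).
\]
The additive-energy hypothesis is tuned precisely to control the right-hand side: substituting $E_{N_m}(\mathcal{A})\ll N_m^{5/2-\varepsilon}/L_m$ cancels the factor $L_m$ and leaves a bound of order $N_m^{-1/2+\eta-\varepsilon}$. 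Choosing $\eta=\varepsilon/2$ and recalling $N_m=m^2$, this is $\ll_\delta m^{-1-\varepsilon}$, which is summable in $m$.

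This step is where the argument hinges, and it explains the choice of the sparse subsequence $N_m=m^2$. The raw energy bound yields only the exponent $-1/2$ in $N_m$, and since $N_m^{-1/2}=m^{-1}$ the series $\sum_m m^{-1}$ diverges; it is exactly the extra saving $\varepsilon$ in the hypothesis $E_N(\mathcal{A})\ll N^{5/2-\varepsilon}/L$ that pushes the exponent strictly below $m^{-1}$ and makes the series converge. I do not expect any genuine difficulty beyond carefully bookkeeping the two independent small parameters $\varepsilon$ and $\eta$ so that the final exponent is summable.

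With summability in hand I would finish by Borel--Cantelli. The cleanest route is to let the threshold shrink with $m$, say $\delta=\delta_m=m^{-\varepsilon/4}$: tracking the explicit $\delta^{-2}$ coming from Chebyshev, the measure bound becomes $\delta_m^{-2}m^{-1-\varepsilon}=m^{-1-\varepsilon/2}$, still summable, so for almost every $\alpha$ the inequality $|\Sigma_{N_m}^2(L_m,\alpha)-L_m|>\delta_m L_m$ holds for only finitely many $m$. Hence
\[
\frac{|\Sigma_{N_m}^2(L_m,\alpha)-L_m|}{L_m}\le\delta_m\to 0
\]
for almost every $\alpha$, which by the identity above is precisely $R_{N_m}^2(L_m,\alpha,\Delta)=L_m+o(1)$ almost surely. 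Equivalently, one may keep $\delta$ fixed, obtain a null exceptional set for each $\delta$, and intersect over a sequence $\delta\to 0$; both variants give the claim.
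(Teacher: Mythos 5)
Your argument is correct and is essentially the paper's own proof: apply Lemma~\ref{lm:weak_bound} with $\varepsilon/2$ in place of $\varepsilon$, use the energy hypothesis to get a measure bound $\ll_{\delta,\varepsilon} N_m^{-1/2-\varepsilon/2}=m^{-1-\varepsilon}$, sum over $m$, and conclude by Borel--Cantelli (the paper takes your second variant: fixed $\delta$, then intersect the null sets over a sequence $\delta\to 0$). One small caution about your ``cleanest route'' with shrinking thresholds $\delta_m=m^{-\varepsilon/4}$: the reduction inside Lemma~\ref{lm:weak_bound} needs $L_m/N_m<\delta_m/2$, which the bare hypothesis $L=o(N)$ does not guarantee (e.g.\ $L\asymp N/\log N$), so the fixed-$\delta$ version is the one that works in full generality.
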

\proof Given any $\delta>0$ and sufficiently large $N$, Lemma \ref{lm:weak_bound} gives
\[
\begin{aligned}
    \meas\{\alpha\in [0,1]:|R_N^2(L,\alpha,\Delta)-L|>\delta\} & = \meas\{\alpha\in [0,1]: |\Sigma_N^2(L,\alpha)-L|>\delta L\} \\ &\ll_{\delta,\varepsilon} LN^{-3+\varepsilon/2}E_N(\mathcal{A})\ll_{\delta,\varepsilon} N^{-1/2-\varepsilon/2}.
\end{aligned}
\]
Thus, we obtain
\[
    \meas\{\alpha\in [0,1]:|R_{N_m}^2(L_m,\alpha,\Delta)-L_m|>\delta\}\ll_{\delta,\varepsilon}m^{-1-\varepsilon},
\]
and therefore
\[
    \sum_{m=1}^\infty\meas\{\alpha\in [0,1]:|R_{N_m}^2(L_m,\alpha,\Delta)-L_m|>\delta\}<\infty.
\]
By the Borel-Cantelli lemma, we conclude that $R_{N_m}^2(L_m,\alpha,\Delta)=L_m+o(1)$ for almost all $\alpha\in [0,1]$.

\qed

We are now ready to prove our main theorem.

\proof[Proof of Theorem \ref{thm: main}] Set $N_m=m^2$, so that $ L_m = m^{2\beta} $. For each $N$, there exists a unique $m$ such that $N_{m-1}\leq N <N_m$. Clearly,
\[
   \frac{N_m}{N}=1+O(1/m),
\] 
so that there exists $C>0$ such that $N_m/N\le 1+C/m$ for sufficiently large $N$. Also, note that
$ L_{m-1} \le L \le L_m $ and that
$ L_m - L_{m-1} = o(1), $ so that $ L_m = L + o(1). $
Thus,
\[
\begin{aligned}
    R_N^2(L,\alpha,\Delta)&\leq \frac{N_m}{N}R_{N_m}^2(LN_m/N,\alpha,\Delta) \\
    &\leq(1+C/m)R_{N_m}^2((1+C/m)L_m,\alpha,\Delta).
\end{aligned}
\]
Since $ 0<\beta<1/2 $, we have \[(1+C/m)L_m=L_m+o(1)=L+o(1).\] Thus, applying Lemma \ref{lm: L_m+o(1)} with $ L(1+C/N^{1/2}) \sim L $, we see that for almost all $ \alpha $, we have
\[
    R_{N_m}^2((1+C/m)L_m,\alpha,\Delta)=(1+C/m)L_m+o(1)=L+o(1).
\]
We conclude that for almost all $ \alpha $,
\[
    R_N^2(L,\alpha,\Delta)\leq(1+C/m)(L+o(1))=L+o(1).
\]
Similarly, for almost all $ \alpha $,
\[
    R_N^2(L,\alpha,\Delta)\geq \frac{N_{m-1}}{N}R_{N_{m-1}}^2(L N_{m-1}/N,\alpha,\Delta)=L-o(1).
\]
Hence, we conclude $R_N^2(L,\alpha,\Delta)=L+o(1)$ and thus $\Sigma_N^2(L,\alpha)=L+o(L)$ for almost all $ \alpha $.

\qed 

\bibliographystyle{plain}
\bibliography{refs}

\end{document}